\documentclass[11pt, reqno]{amsart}

\author{Sanjay Mehrotra \and D{\'a}vid Papp}
\title[Generating quadrature formulas]{Generating nested quadrature formulas for\\ general weight functions with known moments}
\date{March 2012; minor update in April 2016.}

\usepackage{amsthm,amsmath,amssymb}
\DeclareMathAlphabet{\mathpzc}{OT1}{pzc}{m}{it}
\usepackage[margin=1.3in]{geometry}
\usepackage{setspace}
\usepackage[colorlinks=true, citecolor=black, linkcolor=black, urlcolor=black, setpagesize=false, plainpages=false, pdfpagelabels]{hyperref}
\usepackage{booktabs}



\newcommand{\real}{\mathbb{R}}

\allowdisplaybreaks[1]

\newtheoremstyle{myexamples}
{3pt}
{3pt}
{}
{}
{\bfseries}
{.}
{.5em}
{{\thmname{#1}\thmnumber{ #2}\thmnote{ (#3)}}}

\theoremstyle{plain}
\newtheorem{theorem}{Theorem}

\newtheorem{proposition}[theorem]{Proposition}

\theoremstyle{definition}

\theoremstyle{myexamples}

\keywords{Quadrature formula, integration}

\ifpdf\usepackage[final,expansion=true,protrusion=true]{microtype}\fi 

\begin{document}

\maketitle

\begin{abstract}We revisit the problem of extending quadrature formulas for general weight functions, and provide a generalization of Patterson's method for the constant weight function. The method can be used to compute a nested sequence of quadrature formulas for integration with respect to any continuous probability measure on the real line with finite moments. The advantages of the method include that it works directly with the moments of the underlying distribution, and that for distributions with rational moments the existence of the formulas can be verified by exact rational arithmetic.
\end{abstract}

\section{Introduction}

A \emph{quadrature formula} for integration with respect to the \emph{weight function} $\rho\colon\Omega\mapsto\real$ takes the form
\begin{equation}\label{eq:quadrature}
\int_{\Omega}f(t)\rho(t)d{t}\approx \sum_{k=1}^{K}w_k f(t_k).
\end{equation}
The weight function $\rho$ is a strictly positive measurable function that is the probability density function of a continuous random variable with finite moments. The \emph{weights} $w_k$ and \emph{nodes} $t_k$, $k=1,\dots,K$, are chosen so as to maximize the highest degree $d$ for which the approximation \eqref{eq:quadrature} is exact for every polynomial $f$ of degree up to $d$. This degree is called the \emph{degree of polynomial exactness} (sometimes the degree of precision) of the formula. A \emph{quadrature rule} is a sequence of quadrature formulas with an increasing number of nodes and an increasing degree of exactness.

For many applications it is desirable that a quadrature rule be \emph{nested}, that is, that the node set of each formula is a subset of the node set of its successors. To obtain such a sequence, we start with a quadrature formula with $K(1)$ nodes, and extend it to a new formula with higher degree of exactness by adding $K(2)-K(1)$ additional nodes. The weights of the existing nodes may change. Repeated application of such an algorithm shall give rise to a nested sequence of quadrature formulas of increasing degree of polynomial exactness.

In 1964, Kronrod presented a method to extend the well-known Gauss--Legendre formulas \cite{Kronrod1965}. His construction adds, for any $K$, $K+1$ nodes to the $K$-node Gauss--Legendre formula (which has degree of exactness $2K-1$ for the constant weight function $\rho\equiv1/2$ on $\Omega=[-1,1]$), extending it to a formula with $2K+1$ nodes and a degree of exactness at least $3K+1$. He also showed that this is the best possible extension (in terms of the achieved degree of exactness), but he did not consider longer sequences of nested formulas. Patterson \cite{Patterson1968} showed that Kronrod's method can be used to obtain nested rules by iteratively extending the extended formulas. He also considered the constant weight function over the interval $[-1,1]$, the resulting quadrature rule is now known as the Gauss--Kronrod--Patterson (or GKP) rule.

It is possible to generalize Patterson's method to obtain nested quadrature rules for other, non-uniform, continuous distributions with finite moments. To the best of our knowledge, this is very little known or used in the literature. One known example is the quadrature rule proposed by Genz and Keister \cite{GenzKeister1996} for integration with the Gaussian weight function. Their approach is a direct generalization of Patterson's 1964 algorithm. Patterson's 1989 paper \cite{Patterson1989} also considers general weight functions, but that algorithm is not a direct generalization of the first.

Patterson's 1989 method requires that the distribution is given by the three-term linear recurrence relation satisfied by orthogonal polynomial bases with respect to the probability density function of the distribution. Obtaining this recurrence may itself be a difficult task \cite{GolubWelsch1969}, and the coefficients of the recurrence is not as commonly available information for known distributions as moments are. In this note we formally propose and give the details of an algorithm that generates nested sequences of quadrature formulas for general continuous distributions with finite moments. The algorithm circumvents the use of the three-term linear recurrence, and works with the moments of the underlying distribution directly. This yields a streamlined version of Patterson's algorithm, which can also be easily implemented. A Mathematica implementation is also provided.


\section{Extending quadrature formulas using known moments}

Our quadrature formula extension algorithm relies on the following two results. The first one is an immediate generalization of \cite[Theorem 1]{Kronrod1965}; its proof is omitted.

\begin{proposition}\label{lem:GKP-1}
For every probability density function $\rho$ and every set $\{t_1,\dots,t_{K}\}\subseteq \real$ of $K$ nodes there exists unique weights $w_1,\dots,w_{K}$ such that the quadrature formula for integration with respect to $\rho$ has a polynomial degree of exactness at least $K-1$. These weights are the unique solution of the linear (square) system of equations
\begin{equation}\label{eq:Patterson1}
\sum_{i=1}^{K} w_i t_i^k = \int_\Omega t^k \rho(t)dt, \quad k=0,\dots,K-1.
\end{equation}
\end{proposition}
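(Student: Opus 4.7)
The plan is to prove the proposition in two short steps: first, reduce the condition ``polynomial degree of exactness at least $K-1$'' to the linear system \eqref{eq:Patterson1}; second, argue that the resulting $K\times K$ system is nonsingular because its coefficient matrix is (the transpose of) a Vandermonde matrix on the distinct nodes $t_1,\dots,t_K$.

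For the first step, I would observe that a quadrature formula with fixed nodes $t_1,\dots,t_K$ and unknown weights $w_1,\dots,w_K$ is, as a functional of $f$, linear in $f$; and the integral $f\mapsto\int_\Omega f(t)\rho(t)\,dt$ is also linear. Hence the formula is exact for every polynomial of degree at most $K-1$ if and only if it is exact on a basis of that space, which we may take to be the monomials $1,t,t^2,\dots,t^{K-1}$. Writing down exactness on each monomial yields precisely the $K$ equations in \eqref{eq:Patterson1}.

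For the second step, the coefficient matrix of \eqref{eq:Patterson1}, viewed as a system in $w_1,\dots,w_K$, has entry $t_i^k$ in row $k+1$ and column $i$ (for $k=0,\dots,K-1$ and $i=1,\dots,K$). This is the transpose of the Vandermonde matrix with nodes $t_1,\dots,t_K$, whose determinant equals $\prod_{1\le i<j\le K}(t_j-t_i)$. Since $\{t_1,\dots,t_K\}$ is given as a set of $K$ elements, the nodes are distinct, so this determinant is nonzero and the system has a unique solution $w_1,\dots,w_K$. Combining both steps gives existence and uniqueness of the weights with the stated property.

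There is no real obstacle here; the argument is entirely routine linear algebra once one notices the Vandermonde structure. The only point worth stating carefully is the equivalence in the first step, namely that exactness on the monomial basis is equivalent to exactness on the whole polynomial space of degree $\le K-1$, which relies on the bilinearity of the quadrature error in the coefficients of $f$.
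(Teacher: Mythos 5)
Your proof is correct, and it is the standard argument: exactness on the monomial basis $1,t,\dots,t^{K-1}$ reduces, by linearity of the error functional in $f$, to the system \eqref{eq:Patterson1}, whose coefficient matrix is the transpose of a Vandermonde matrix on the distinct nodes $t_1,\dots,t_K$ and hence nonsingular. Note that the paper deliberately omits any proof of this proposition, remarking only that it is an immediate generalization of Kronrod's Theorem 1, so there is no proof to compare against --- your argument is exactly the routine one the authors are suppressing. One small wording correction: the equivalence in your first step rests on the \emph{linearity} of the quadrature error in $f$ (equivalently, in the coefficients of $f$), not on bilinearity; bilinearity would describe the joint dependence on the weights and on $f$, which plays no role here.
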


\begin{theorem}\label{thm:GKP-2}
Let $\rho$ be the probability density function of a distribution supported on $\Omega\subseteq\real$ with finite moments. Let $F$ be a univariate polynomial of degree $n$ with $n$ distinct real roots, and suppose that there exists a polynomial $G$ of degree $p$ satisfying
\begin{equation}\label{eq:GKP-1}
\int_\Omega F(t)G(t)t^i\rho(t)dt = 0,\quad i=0,\dots,p-1.
\end{equation}
Assume further that the roots of $G$ are all real and of multiplicity one, and distinct from those of $F$. Then there exists a quadrature formula supported on the roots of $FG$, whose degree of polynomial exactness is at least $n+2p-1$.
\end{theorem}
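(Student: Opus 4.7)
The plan is to mimic the standard proof of Gaussian quadrature, now with the product $FG$ playing the role of the orthogonal polynomial whose roots serve as the nodes. The hypotheses guarantee that the $n+p$ roots of $FG$ are real and pairwise distinct, so this node set is admissible.

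First I would apply Proposition \ref{lem:GKP-1} to the $n+p$ roots $\{t_1,\dots,t_{n+p}\}$ of $FG$: this produces unique weights $w_1,\dots,w_{n+p}$ for which the resulting quadrature formula is exact for every polynomial of degree at most $n+p-1$. The remaining task is to upgrade this bound from $n+p-1$ to $n+2p-1$, i.e.\ to verify exactness on the extra block of polynomials of degree between $n+p$ and $n+2p-1$.

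The key step is a polynomial division argument. For an arbitrary polynomial $f$ of degree at most $n+2p-1$, I would divide by $FG$ (which has degree $n+p$), writing
\begin{equation*}
f(t) = F(t)G(t)\, q(t) + r(t),
\end{equation*}
where $\deg q \le p-1$ and $\deg r \le n+p-1$. Now integration against $\rho$ splits as $\int_\Omega f\rho\,dt = \int_\Omega FGq\,\rho\,dt + \int_\Omega r\,\rho\,dt$; the first term vanishes by the orthogonality hypothesis \eqref{eq:GKP-1}, since $q$ is a linear combination of $1,t,\dots,t^{p-1}$. On the discrete side, $\sum_i w_i f(t_i) = \sum_i w_i F(t_i)G(t_i)q(t_i) + \sum_i w_i r(t_i)$, and every term of the first sum is killed because each $t_i$ is a root of $FG$. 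Since $\deg r \le n+p-1$, the already-established exactness from Proposition \ref{lem:GKP-1} gives $\sum_i w_i r(t_i) = \int_\Omega r\,\rho\,dt$, and equating the two sides finishes the proof.

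I do not expect any serious obstacle: the only subtlety is to invoke the hypothesis that $F$ and $G$ have simple real roots distinct from each other so that $FG$ really has $n+p$ distinct real roots, which is exactly what makes Proposition \ref{lem:GKP-1} applicable and supplies the base exactness level that the orthogonality of $FG$ then doubles (minus one) in the standard Gauss-type fashion.
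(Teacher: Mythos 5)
Your proposal is correct and follows essentially the same argument as the paper: divide an arbitrary polynomial of degree at most $n+2p-1$ by $FG$, use Proposition \ref{lem:GKP-1} with $K=n+p$ for the base exactness, annihilate the quotient term on the integral side via \eqref{eq:GKP-1} and on the discrete side because the nodes are roots of $FG$. The only (harmless) difference is that you make explicit the role of the simple, distinct real roots in justifying the application of Proposition \ref{lem:GKP-1}, which the paper leaves implicit.
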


\begin{proof}
The polynomial $FG$ has degree $n+p$, therefore every polynomial $H$ of degree $n+2p-1$ can be written as $H = QFG + R$ for some polynomials $Q$ of degree $p-1$ and $R$ of degree $n+p-1$. Consider now the quadrature formula supported on the roots of $FG$ with degree of polynomial exactness at least $n+p-1$, whose existence is established in the previous Lemma (with $n+p$ in place of $K$). Denoting by $q(H)$ the value $\sum_{i=1}^{n+p}w_iH(t_i)$ of this quadrature formula for the polynomial $H$ we have that $q(R) = \int_\Omega R(t)\rho(t)dt$ because $R$ has degree $n+p-1$; $q(QFG)=0$ because $q$ is supported on the roots of $FG$; and $\int_\Omega Q(t)F(t)G(t)\rho(t)dt=0$ owing to \eqref{eq:GKP-1} and the fact that $Q$ is of degree $p-1$. Hence,
\begin{align*}
\int_\Omega H(t)\rho(t)dt &= \int_\Omega R(t)\rho(t)dt + \int_\Omega Q(t)F(t)G(t)\rho(t)dt =\\
&= q(R) + 0 = q(R) + q(QFG) = q(H).
\end{align*}
Therefore, the formula gives the correct value of $\int_\Omega H(t)\rho(t)dt$ for every $H$ of degree $n+2p-1$.
\end{proof}

These assertions suggest the following algorithm:
\begin{enumerate}
	\item Consider an arbitrary quadrature formula on $n$ nodes, and the polynomial $F$ whose roots (of multiplicity one) are the nodes of the formula.
	\item Choose an integer $p \geq 1$, and find a degree $p$ polynomial $G$ that is not identically zero and that solves the system of equations \eqref{eq:GKP-1}. Eq.~\eqref{eq:GKP-1} is a homogeneous system of linear equations with one fewer variables than equations, but we can assume that the leading coefficient of $G$ is $1$, and turn \eqref{eq:GKP-1} into an inhomogeneous square system of linear equations.
	\item Determine the roots of $G$; these are the new nodes of the extended formula. Now find the weights of the formula by solving \eqref{eq:Patterson1}.
\end{enumerate}

By Theorem \ref{thm:GKP-2}, the resulting formula adds new nodes to the initial formula, potentially increasing its degree of polynomial exactness. If $p > n/2$, the second formula necessarily has a higher degree of exactness, since an $n$-node formula cannot have a degree of exactness higher than $2n-1$. Repeated application of this algorithm may yield a nested sequence of quadrature formulas of increasing degree of exactness.

As with Patterson's 1964 algorithm, this algorithm may also fail to yield a new formula if either the linear system \eqref{eq:GKP-1} has no nonzero solution, or $G$ has complex roots or real roots of multiplicity higher than one, or if $F$ and $G$ have common roots. If the algorithm fails for a given $p$, different values of $p$ may be tried sequentially. Finally, different initial formulas may give rise to different sequences.

\section{Further remarks}

The original GKP formulas can be obtained using the above algorithm with $\Omega = [-1,1]$ and $\rho(t) = 1/2$, starting with the trivial $1$-node formula (with the node at $0$, with weight $1$), and taking $p=n+1$ in each iteration. The number of nodes is thus $2^{i+1}-1$ after $i$ iterations. It is not known whether this process can be repeated indefinitely, but formulas up to $511$ nodes have been determined. The Genz--Keister formulas mentioned above can be obtained by successively applying the above algorithm to the normal distribution.

It should be noted that although the computation of the nodes and weights may be numerically challenging (especially since the matrix of the linear system \eqref{eq:GKP-1} is an ill-conditioned Hankel matrix), the \emph{existence} of the solution for a given $p$ can be decided using exact rational arithmetic for every distribution whose moments are rational numbers. In this case, the coefficients of $G$ (if $G$ exists) are rational numbers, and the number of those roots of $G$ that are distinct from the roots of $F$ can be determined without the computation of the roots of either $F$ or $G$ \cite[Chap.~2]{BPR-2003}.

A Mathematica implementation of the algorithm, which runs with exact rational arithmetic or extended precision arithmetic based on the input, is available from the authors, and is given in the Appendix, where a numerical example is also provided.

\appendix

\section{Mathematica code}
A simple Mathematica implementation of the quadrature rule generation algorithm consists of two functions. The first one, \texttt{FormulaExtension[F,p,moments,\{var,a,b\},prec]} returns the polynomial $G$ satisfying the conditions of \mbox{Theorem \ref{thm:GKP-2}}, or $0$ if such a polynomial does not exist.

\begin{verbatim}
FormulaExtension[F_, p_, moments_, {var_, a_, b_}, prec_: \[Infinity]] :=
Module[{G = Sum[g[i] var^i, {i, 0, p}], 
        mts = Take[moments, Exponent[F, var] + 2 p + 1],
        integrals, M, ls, success = False, ep},
   If[prec < \[Infinity], mts = SetPrecision[mts, prec]];
   integrals = Table[CoefficientList[F var^ipj, var].
      Table[mts[[i+1]], {i,0,Exponent[F var^ipj, var]}], {ipj, 0, 2 p}];
   M = HankelMatrix[Take[integrals, (Length[integrals]+1)/2],
      Take[integrals, -(Length[integrals]+1)/2]];
   Quiet[ls = LinearSolve[Join[Drop[M, -1], {Append[Array[0 &, p], 1]}],
      Append[Array[0 &, p], 1]];
      If[Head[ls] =!= LinearSolve,
         ep = ls.var^Range[0, p];
         success = CountRoots[ep,{var,a,b}]==p &&
            Resultant[F,ep,var]!=0 && Discriminant[ep,var]!=0
      ]
   ];
   If[success, ep, 0]
]
\end{verbatim}

The input arguments \texttt{F} and \texttt{p} of \texttt{FormulaExtension} are as in Theorem \ref{thm:GKP-2}; \texttt{moments} is the list of known moments; the list \texttt{\{var,a,b\}} contains the symbol for the variable of $F$ and the the boundaries of the interval $\Omega=[a,b]$; and \texttt{prec} is an optional argument that controls the precision of the numerical calculations. Leaving it at its default value, infinity, the calculations are carried out with the lowest precision of all the inputs. If the coefficients of $F$, the moments, and the endpoints of the interval $[a,b]$ are all given as (symbolic) rational numbers, $G$ all computations are carried out using exact rational arithmetic.

The moments can be obtained, for instance, by symbolic integration using Mathematica. The moments of well-known distributions are also well-known, their precomputed values can be accessed through the function \texttt{Moment}.

The second function needed to compute the formulas is \texttt{NodesAndWeights[F,moments,var,prec]}, which returns the nodes and weights of the formula supported on the roots of \texttt{F} by solving \mbox{Eq.~\eqref{eq:Patterson1}}.

\begin{verbatim}
NodesAndWeights[F_, moments_, var_, prec_: $MachinePrecision] :=
Module[{w, expF = Exponent[F, var], 
        nodes = var /. {ToRules[NRoots[F==0,var,PrecisionGoal->prec]]}},
  Transpose@{
     nodes,
     Array[w, expF] /. First@Solve@Table[
       Sum[w[k] If[nodes[[k]]==0 && i==0, 1, nodes[[k]]^i], {k, expF}] ==
       moments[[i+1]], {i,0, expF-1}]
  }
]
\end{verbatim}

\section{Example}
We illustrate the approach by computing nested quadrature formulas for the Beta($1/2$,$1/2$) distribution. First, we compute the moments of the distribution:
\begin{verbatim}
   moms = Moment[BetaDistribution[1/2, 1/2], #] & /@ Range[0, 100];
\end{verbatim}
Then, starting with the ``empty'' formula (corresponding to the polynomial constant $1$), we find the polynomials corresponding to the successive extensions of formulas:
\begin{verbatim}
   F = 1;
   F = F FormulaExtension[1, 1, moms, {t, 0, 1}]
   F = F FormulaExtension[F, 2, moms, {t, 0, 1}]
   F = F FormulaExtension[F, 4, moms, {t, 0, 1}]
   F = F FormulaExtension[F, 6, moms, {t, 0, 1}]
   F = F FormulaExtension[F, 12, moms, {t, 0, 1}]
\end{verbatim}
After the last step we obtain the polynomial
{\scriptsize
\begin{align*}
& F(t)= \left(t-\frac{1}{2}\right)\left(t^2-t+\frac{1}{16}\right)\left(t^4-2 t^3+\frac{19
   t^2}{16}-\frac{3 t}{16}\right) \left(t^6-3
   t^5+\frac{27 t^4}{8}-\frac{7 t^3}{4}+\frac{105
   t^2}{256}-\frac{9 t}{256}+\frac{1}{2048}\right)\cdot\\
& \cdot \left(t^{12}-6 t^{11}+\frac{63 t^{10}}{4}-\frac{95
   t^9}{4}+\frac{2907 t^8}{128}-\frac{459
   t^7}{32}+\frac{1547 t^6}{256}-\frac{429
   t^5}{256}+\frac{19305 t^4}{65536}-\frac{1001
   t^3}{32768}+\frac{429 t^2}{262144}-\frac{9
   t}{262144}+\frac{1}{8388608}\right).
\end{align*}}%

Finally, we can obtain the formulas by running \texttt{NodesAndWeights[F,moms,t,prec]} after each call of \texttt{FormulaExtension} above, with our choice of precision \texttt{prec}. Table \ref{tbl:beta1p21p2} shows the nodes of the obtained formulas.

The running time of the five calls to \texttt{FormulaExtension} was under $0.1$ seconds on a standard laptop with a 2.83 Ghz CPU running Mathematica 8.0.1. The CPU time required for the computing the weights depends on the precision. With \texttt{prec=100} the weights of the five formulas were obtained in $0.4$ seconds.

\begin{table}
\centering
\begin{tabular}{lc}
\toprule
node & index \\
\midrule
0. & 3 \\
0.0042775693130947944277212365357185643611308627759489 & 5 \\
0.017037086855465856625128400135551316183047580495798 & 4 \\
0.038060233744356621935908405301605856588791687068179& 5 \\
0.066987298107780676618138414623531908264298686547405 & 2 \\
0.10332332985438241771011151924935036168566203947404 & 5 \\
0.14644660940672623779957781894757548035758203115576 & 4 \\
0.19561928549563968029195122855091799774180314401876 & 5 \\
0.25 & 3 \\
0.30865828381745511413577000798480056661932771875719 & 5 \\
0.37059047744873961882555058118797583582546554934003 & 4 \\
0.43473690388997420422579688605225549490312964759413 & 5 \\
0.5 & 1 \\
0.56526309611002579577420311394774450509687035240587 & 5 \\
0.62940952255126038117444941881202416417453445065997 & 4 \\
0.69134171618254488586422999201519943338067228124281 & 5 \\
0.75 & 3 \\
0.80438071450436031970804877144908200225819685598124 & 5 \\
0.85355339059327376220042218105242451964241796884424 & 4 \\
0.89667667014561758228988848075064963831433796052596 & 5 \\
0.93301270189221932338186158537646809173570131345260 & 2 \\
0.96193976625564337806409159469839414341120831293182 & 5 \\
0.98296291314453414337487159986444868381695241950420 & 4 \\
0.99572243068690520557227876346428143563886913722405 & 5 \\
1. & 3 \\
\bottomrule\\
\end{tabular}
\caption{Nodes of a nested quadrature rule for the Beta($1/2$,$1/2$) distribution. The index column shows the smallest $k$ for which the node appears in the $k$th formula. The weights of the formulas are omitted for brevity.}\label{tbl:beta1p21p2}
\end{table}

\bibliographystyle{amsplain}
\bibliography{patterson}

\end{document}